\documentclass{my-jca32x}

\usepackage{mathrsfs}
\usepackage{amssymb}
\usepackage{url}

\begin{document}
	
\setcounter{page}{1}


\title{Variational Formulations for Fractional Integral and Differential Equations}

\author{
\ftauthor{Delfim F. M. Torres%
\footnote{The research was funded by a grant from the Portuguese 
Foundation for Science and Technology (FCT) through the R\&D Unit 
CIDMA (Project No. UID/04106/2025).}}
\ftaddress{Center for Research and Development in Mathematics and Applications (CIDMA),\\
Department of Mathematics, University of Aveiro, 3810-193, Aveiro, Portugal\\[-2pt]
delfim@ua.pt}}

\def\runtit{D.F.M.\,Torres / Variational Formulations ...}


\maketitle


\ReceivedAccepted{April 22, 2025}{June 25, 2026}


\abstract{

{\bf (Dedicated to Alexander Plakhov
on the occasion of his 65th anniversary)}
	
Given a fractional-order linear equation $L^\alpha u = f$,
we define an appropriate symmetric bilinear form so that
the fractional operator $L^\alpha$ is symmetric with respect
to that bilinear form. Using the bilinear form, we then define 
a functional of the fractional calculus of variations 
proving that the solutions of the given fractional-order equation are 
critical points of the fractional variational functional.
In the case of fractional integral equations, the provided
bilinear form is non-degenerate, and all critical points
are solutions of the given equation.
In the case of fractional differential equations, 
a relation with the least-squares method is obtained.}


\vskip-2mm
\keyw{Fractional-order integral equations, 
fractional--order differential equations,
fractional calculus of variations,
fractional Euler--Lagrange equations,
inverse problems.}


\AMSsc{2020}{34A08, 49K05, 49N45.}


\section{Introduction}

In recent years, fractional (non-integer order)
differential and integral equations have attracted much attention, 
see, e.g., \cite{MR4163834,MR4648926} and references therein.

The fractional Calculus of Variations 
was introduced in 1996--1997, with the work of Riewe in Physics, 
to obtain a Lagrangian for a dissipative system with a damping 
force proportional to the velocity \cite{CD:Riewe:1996,CD:Riewe:1997}.
Recently, Torres introduced a fractional-order 
calculus of variations by providing new formulas of integration by parts
that involve only left or only right fractional operators \cite{MR4740251}. 
This allowed him to obtain, in contrast with all previously available results
of the literature, Euler--Lagrange equations that involve only left 
or only right fractional operators, thus providing a long-sought answer 
to some non-causality concerns on the fractional calculus of variations 
\cite{MR3390552}. Moreover, and as a consequence of the new perspective 
to the subject, the first example of a Lagrangian involving only left 
fractional derivatives was obtained for which the respective Euler--Lagrange 
equation coincide with the equation of motion of a dissipative system \cite{MR4740251}.

Here, we interpret the fractional integration by parts formulas of \cite{MR4740251}
as the symmetry of the fractional-order operator with respect to suitable bilinear forms.
This new interpretation allows us to associate fractional integral 
and differential equations to fractional-order
problems of the calculus of variations, in line with the classical (integer-order)
calculus of variations \cite{MR1502361,MR1501455,MR0004740,MR0350177,MR1502352}.

To the best of our knowledge, the interpretation of the
one-sided fractional integration-by-parts formulas of
\cite{MR4740251}, as symmetry relations associated with
bilinear forms, together with the resulting variational
formulations, has not been previously reported.
In fact, while the (direct) 
fractional calculus of variations is well established,
see, e.g., the books \cite{book:frac:ICP2,MyID:404,book:adv:FCV,book:frac},
the inverse problem is less studied. Precisely, the inverse problem
of the fractional calculus of variations has only been solved
for specific fractional partial differential equations 
-- the fractional wave equation, the fractional diffusion equation, 
and the incompressible Stokes' equations \cite{MR2557007};
and for fractional ordinary differential equations involving simultaneously
left and right fractional operators \cite{MR2824707,MR3091899,MR3715699}.
Here, using the recent approach of \cite{MR4740251}, we are able to deal
with more standard fractional integral and fractional ordinary differential 
equations involving only left (or only right) operators.

The paper is organized as follows. In Section~\ref{sec:02}, we 
solve the inverse problem of the fractional calculus of variations
for fractional integral equations $\mathcal{I}^{\alpha} u = f$
(see Theorem~\ref{thm:01}), obtaining also new insights 
to the well-studied direct problem of the fractional
calculus of variations (see Corollary~\ref{cor:01}). 
A fractional variational formulation for fractional 
differential equations $\mathcal{D}^{\alpha} u = f$
is then obtained in Section~\ref{sec:03} (see Theorem~\ref{thm:02}).
It should be observed that the situations 
studied in Sections~\ref{sec:02} and \ref{sec:03}
are intrinsically different and require the use of different
bilinear forms so that the considered fractional operators 
become symmetric with respect to the bilinear form. 
Moreover, our variational formulation for fractional integral operators 
$\mathcal{I}^{\alpha}$ is stronger than the one obtained for
fractional differential operators $\mathcal{D}^{\alpha}$,
in the sense Theorem~\ref{thm:01} provides a necessary and sufficient
condition while, in contrast, Theorem~\ref{thm:02} is only an implication.
We end our paper with Section~\ref{sec:rem:02},
establishing a link with the least-squares method.

Along the text, $\alpha$'s are always real numbers 
between zero and one: $\alpha \in (0,1)$.
Our results are formal and the reader 
is assumed to be familiar with 
the classical fractional calculus
of Riemann--Liouville \cite{MR1347689}.
In particular, the reader
is referred to \cite{MR1347689}
for the domains and ranges of the considered operators.


\section{Variational Formulation of Fractional Integral Equations}
\label{sec:02}

In this section, we use the fractional calculus of variations of \cite{MR4740251}
to obtain a variational formulation for a given fractional integral equation
or a given system of fractional integral equations of the form
\begin{equation}
\label{eq:01}
\mathcal{I}^{\alpha} u = f,
\end{equation}
where $u$ denotes a scalar or vector integrable function on the interval $[a, b]$
and $\mathcal{I}^{\alpha}$ denotes the left $\mathcal{I}^{\alpha}_{a+}$ 
or the right $\mathcal{I}^{\alpha}_{b-}$ 
fractional integral of order $\alpha$.
Our aim here is to obtain a functional $\mathcal{J}_{I}^{\alpha}[u]$ 
such that its critical points coincide with the solutions of the given fractional
integral equation \eqref{eq:01}. In other words, while in \cite{MR4740251}
Torres has introduced a fractional calculus of variations and studied its direct
problem, here we are interested in its inverse problem: to begin with
a fractional equation and obtain the associated fractional variational functional.

\subsection{The bilinear form associated with the fractional integral equation}

Let us define the functional $\left<\cdot, \cdot\right>$ by
\begin{equation}
\label{eq:01b}
\left<v,u\right> := \int_{a}^{b} v(t) u^*(t) dt,
\end{equation}
where $u^*$ is the dual function of $u$ ($u^*(t) = u(b-t+a)$,
see Definition 3.1 of \cite{MR4740251}). 

We use here the symbol $\left<\cdot, \cdot\right>$ to stress that the functional \eqref{eq:01b}
satisfies the same properties of the scalar product of two elements of an Hilbert space.
The proof of Proposition~\ref{prop:01} is a simple exercise and is left to the reader.

\begin{prop}
\label{prop:01}	
The functional $\left<v,u\right>$ is a bilinear form (linear in both arguments $v$ and $u$), and
symmetric ($\left<v,u\right> = \left<u,v\right>$). 
Moreover, $\left<v,u\right>$ is non-degenerate 
(if $\left<v,u\right> = 0$ for all $v$, then $u=0$; 
if $\left<v,u\right> = 0$ for all $u$, then $v=0$).
In the space of functions satisfying the symmetry condition $u(t)=u^*(t)$,
the functional $\left<v,u\right>$ is also positive definite ($\left<u,u\right> \geq 0$ 
for all $u$, with $\left<u,u\right> = 0$ if, and only if, $u=0$). 
\end{prop}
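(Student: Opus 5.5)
The plan is to check each property by a direct computation, using only the reflection $\sigma(t) = a+b-t$. This map sends $[a,b]$ onto itself, is an involution, and has Jacobian of absolute value one, so $u^{*} = u\circ\sigma$ and $(u^{*})^{*} = u$.

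\emph{Bilinearity.} Since $u \mapsto u^*$ is linear ($(\lambda u + \mu w)^* = \lambda u^* + \mu w^*$) and the integral is linear, \eqref{eq:01b} is linear in $v$ and in $u$.

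\emph{Symmetry.} Substitute $s = a+b-t$ in $\int_a^b v(t)u(a+b-t)\,dt$. The orientation reversal and the sign of $ds=-dt$ cancel, which gives $\int_a^b v(a+b-s)u(s)\,ds = \left<u,v\right>$.

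\emph{Non-degeneracy.} Suppose $\left<v,u\right>=0$ for all admissible $v$. Take $v = u^*$ (or, for vector-valued $u$, $v = \overline{u^*}$ componentwise, with the product read as a dot product). Then
\[
0 = \int_a^b |u^*(t)|^2\,dt,
\]
so $u^*=0$ almost everywhere, and hence $u = (u^*)^* = 0$. If the admissible class does not contain $u^*$ (for instance when $u$ is only integrable), I would instead test against indicator functions of subintervals, or against smooth compactly supported $v$. This gives $\int_J u^* = 0$ for every interval $J$, so $u^*=0$ a.e. by the fundamental lemma of the calculus of variations (du Bois-Reymond) or Lebesgue differentiation. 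The second statement follows from the first by the symmetry already shown.

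\emph{Positive definiteness.} On the subspace where $u = u^*$, we have
\[
\left<u,u\right> = \int_a^b u(t)u^*(t)\,dt = \int_a^b u(t)^2\,dt \ge 0.
\]
Equality forces $u=0$ almost everywhere, and so $u=0$ as an element of the function space.

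The only delicate point is the non-degeneracy step, specifically the choice of test-function class. "All $v$" must be large enough for the fundamental lemma to apply, and equalities hold only almost everywhere. Consistent with the paper's formal stance, I would state that the functions live in $L^2(a,b)$, or are integrable with $v$ ranging over bounded functions. With that convention, the choice $v=u^*$ or the indicator argument closes the proof.
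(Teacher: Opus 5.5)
Your proof is correct. The paper gives no argument of its own (it leaves Proposition~\ref{prop:01} as an exercise), and your direct verification is exactly the routine check intended: the substitution $s=a+b-t$ for symmetry, the test function $v=u^*$ yielding $\int_a^b |u^*|^2\,dt=0$ for non-degeneracy (with symmetry handling the other slot), and $\left<u,u\right>=\int_a^b u^2\,dt$ when $u=u^*$. Your insistence on fixing the admissible class (e.g.\ $L^2(a,b)$) so that the form is defined and ``for all $v$'' has force improves on the paper's formal statement. Note that positive definiteness needs real-valued functions, so the complex-conjugate aside is unnecessary.
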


\subsection{Integration by parts for fractional integrals}

We now provide a new look to the formulas 
of integration by parts for fractional integrals
proved in \cite{MR4740251}.

\begin{prop}
\label{prop:02}
Let $\mathcal{I}^{\alpha}$ be a given linear fractional integral operator.
Then $\mathcal{I}^{\alpha}$ is symmetric with respect to the bilinear
form \eqref{eq:01b}, that is,
\begin{equation}
\label{eq:02}
\left<\mathcal{I}^{\alpha} u_1,u_2\right> 
= \left<\mathcal{I}^{\alpha} u_2,u_1\right>.
\end{equation}
\end{prop}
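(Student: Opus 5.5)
The plan is to write $\left<\mathcal{I}^{\alpha} u_1,u_2\right>$ explicitly as a double integral and show, after the change of variables that defines the dual function, that its kernel is symmetric in the two integration variables. This symmetry is exactly the one-sided integration by parts formula of \cite{MR4740251}, which I would re-derive in the language of the bilinear form \eqref{eq:01b}. I treat the left integral $\mathcal{I}^{\alpha}_{a+}$ first and then the right integral $\mathcal{I}^{\alpha}_{b-}$ in the same way.

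For the left integral, by definition of \eqref{eq:01b} and of $u_2^*$,
\begin{equation*}
\left<\mathcal{I}^{\alpha}_{a+} u_1,u_2\right>
=\frac{1}{\Gamma(\alpha)}\int_a^b\int_a^t (t-s)^{\alpha-1}u_1(s)\,u_2(a+b-t)\,ds\,dt .
\end{equation*}
Substituting $\tau=a+b-t$ maps the triangle $a\le s\le t\le b$ onto the triangle $T=\{(s,\tau): s\ge a,\ \tau\ge a,\ s+\tau\le a+b\}$, and the kernel becomes $(a+b-s-\tau)^{\alpha-1}$. Hence
\begin{equation*}
\left<\mathcal{I}^{\alpha}_{a+} u_1,u_2\right>
=\frac{1}{\Gamma(\alpha)}\iint_{T}(a+b-s-\tau)^{\alpha-1}u_1(s)\,u_2(\tau)\,ds\,d\tau .
\end{equation*}
Both the region $T$ and the kernel are invariant under the swap $s\leftrightarrow\tau$. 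Interchanging the names of the variables therefore turns this expression into the same formula with $u_1$ and $u_2$ exchanged, which is $\left<\mathcal{I}^{\alpha}_{a+} u_2,u_1\right>$. This proves \eqref{eq:02} for the left integral.

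For the right integral, $\mathcal{I}^{\alpha}_{b-}u_1(t)=\frac{1}{\Gamma(\alpha)}\int_t^b (s-t)^{\alpha-1}u_1(s)\,ds$. The same substitution $\tau=a+b-t$ now gives the region $\{s\le b,\ \tau\le b,\ s+\tau\ge a+b\}$ and the kernel $(s+\tau-a-b)^{\alpha-1}$. Both are again symmetric under $s\leftrightarrow\tau$, so the identical argument applies. For vector-valued $u$, the product $v(t)u^*(t)$ is read as the Euclidean inner product, and the computation goes through componentwise.

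The only delicate step, which I expect to be the main (though mild) obstacle, is justifying the use of Fubini's theorem when the double integral is read in the other order. I would apply Tonelli's theorem to the absolute value of the integrand. Extending the kernel by zero, one has $|k(s,\tau)|\le (a+b-s-\tau)_+^{\alpha-1}$, and this is integrable in $\tau$ uniformly in $s$, since $\int (\cdot)^{\alpha-1}<\infty$ for $\alpha>0$. This gives integrability for $u_1,u_2\in L^1$ with one of them bounded, or with $u_1\in L^p$ and $u_2\in L^q$ for suitable exponents, via Young's inequality. Since the paper declares its results formal, with domains taken as in \cite{MR1347689}, I would state this justification briefly under those standing assumptions.
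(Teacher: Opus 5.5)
Your proof is correct, but it takes a different route from the paper. The paper does no computation: it obtains \eqref{eq:02} in one line from definition \eqref{eq:01b} and the one-sided integration-by-parts formulas of \cite{MR4740251} (Theorems~3.12 and~3.13). You instead re-derive that formula directly, and your bookkeeping is right in both cases. The reflection $\tau=a+b-t$ has unit Jacobian, the image regions are the ones you state, and the kernels become $(a+b-s-\tau)^{\alpha-1}$ and $(s+\tau-a-b)^{\alpha-1}$.

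The citation buys brevity and keeps the focus on the paper's real point, which is reading a known formula as a symmetry. Your computation explains \emph{why} the symmetry holds. Since $\left<v,u\right>=\int_a^b v^*(t)u(t)\,dt$, the form is the ordinary pairing applied to the reflected operator $u\mapsto(\mathcal{I}^{\alpha}u)^*$, whose kernel depends only on $s+\tau$ and is therefore symmetric. It also shows that only the convolution structure $k(t-s)$ of the kernel is used. The same argument therefore gives \eqref{eq:02} for any left or right Volterra convolution operator, tempered fractional integrals for instance. Finally, your Tonelli estimate makes the formal identity rigorous under explicit hypotheses, such as $u_1\in L^1$, $u_2\in L^\infty$, or $u_1\in L^p$, $u_2\in L^q$ with $1/p+1/q<1+\alpha$.

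That caution is warranted. Take $u_1(s)=(b-s)^{-\beta}$ and $u_2(s)=(s-a)^{-\gamma}$ with $\beta,\gamma<1$ but $\beta+\gamma\ge 1+\alpha$. Both functions are integrable, yet $\left<\mathcal{I}^{\alpha}_{a+}u_1,u_2\right>=+\infty$.
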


\begin{proof}
Relation \eqref{eq:02} follows directly from
our definition \eqref{eq:01b} and
Theorems~3.12 and 3.13 of \cite{MR4740251}.
\end{proof}

The importance of Proposition~\ref{prop:02} is that it allows one to reinterpret
fractional integration-by-parts formulas as symmetry properties of the
fractional operator. Indeed, in the classical theory, the identity
$$
\int_a^b (Lu)v dt
=
\int_a^b u(Lv) dt
$$
is often viewed as a manifestation of self-adjointness.
Similarly, relation \eqref{eq:02} shows that the fractional integral 
operator $\mathcal{I}^\alpha$ becomes symmetric when viewed through the bilinear form
$\langle\cdot,\cdot\rangle$.

This observation provides a bridge between fractional calculus and the
classical inverse problem of the calculus of variations.
Rather than constructing a Lagrangian by ad hoc methods, one starts from
a symmetry property induced by the fractional integration-by-parts
formula and obtains the variational functional in a systematic way.

\subsection{Variational formulation for the fractional integral equation}

Our first main result follows 
from Propositions~\ref{prop:01} and \ref{prop:02}.
The bilinear forms introduced in this work play a role analogous to the
inner products that appear in the classical variational formulation of
differential equations. In the standard Hilbert-space setting, a linear
operator can often be represented, variationally, whenever it is
symmetric with respect to a suitable bilinear form.
This observation lies at the heart of many variational methods in
mathematical physics and numerical analysis \cite{MR1115205,MR0350177}.

For fractional operators involving only left or only right
Riemann--Liouville operators, the standard $L^2$ inner product does not
provide the appropriate framework. The duality theory developed in
\cite{MR4740251} suggests replacing the classical scalar product by
bilinear forms involving the dual transformation
$$
u \mapsto u^*,
\qquad
u^*(t)=u(b-t+a).
$$
The resulting bilinear forms naturally encode the duality between left
and right fractional operators and transform the integration-by-parts
formulas of fractional calculus into symmetry relations.

From this perspective, the bilinear forms are not auxiliary objects but
rather the geometric structures that make a variational interpretation
possible.

In some sense, our next result can be interpreted as a fractional analogue
of the classical principle that symmetric operators admit natural
quadratic variational formulations
\cite{MR1502361,MR0004740}.
The strategy adopted here follows a classical principle of
variational analysis: rather than searching directly for a
Lagrangian, one first identifies a bilinear form with respect to
which the governing operator is symmetric. Once such a structure
is available, the associated quadratic functional arises naturally.
This philosophy goes back to the classical theory of operator
equations and weak formulations: see, for instance,
\cite{MR0350177}.

\begin{theo}
\label{thm:01}
A function $u$ is a solution of the fractional integral equation \eqref{eq:01}
if, and only if, $u$ is a critical point of the fractional variational functional
\begin{equation}
\label{eq:03}
\mathcal{J}_{I}^{\alpha}[u] :=
\frac{1}{2}\left<\mathcal{I}^{\alpha} u,u\right> - \left<f,u\right>.
\end{equation}
\end{theo}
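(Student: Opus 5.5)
The plan is to compute the first variation of $\mathcal{J}_{I}^{\alpha}$ directly and then read off the equation using the non-degeneracy in Proposition~\ref{prop:01}. Fix $u$ and an arbitrary admissible variation $v$, that is, an integrable function (scalar or vector, as in \eqref{eq:01}) on $[a,b]$. For $\varepsilon\in\mathbb{R}$, I will expand $\mathcal{J}_{I}^{\alpha}[u+\varepsilon v]$ using the linearity of $\mathcal{I}^{\alpha}$ and the bilinearity of $\left<\cdot,\cdot\right>$ from Proposition~\ref{prop:01}. This gives
\begin{equation*}
\mathcal{J}_{I}^{\alpha}[u+\varepsilon v]
= \mathcal{J}_{I}^{\alpha}[u]
+ \frac{\varepsilon}{2}\Bigl(\left<\mathcal{I}^{\alpha} u,v\right>
+ \left<\mathcal{I}^{\alpha} v,u\right>\Bigr)
- \varepsilon\left<f,v\right>
+ \frac{\varepsilon^2}{2}\left<\mathcal{I}^{\alpha} v,v\right>.
\end{equation*}
It follows that the G\^ateaux derivative at $\varepsilon=0$ is
\begin{equation*}
\delta\mathcal{J}_{I}^{\alpha}[u;v]
= \frac{1}{2}\Bigl(\left<\mathcal{I}^{\alpha} u,v\right>
+ \left<\mathcal{I}^{\alpha} v,u\right>\Bigr) - \left<f,v\right>.
\end{equation*}

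The key step is to symmetrize the mixed term. By Proposition~\ref{prop:02}, with $u_1=v$ and $u_2=u$, we have $\left<\mathcal{I}^{\alpha} v,u\right>=\left<\mathcal{I}^{\alpha} u,v\right>$. Hence
\begin{equation*}
\delta\mathcal{J}_{I}^{\alpha}[u;v] = \left<\mathcal{I}^{\alpha} u - f, v\right>,
\end{equation*}
which is the fractional Euler--Lagrange identity of this problem. This is exactly where the one-sided integration by parts of \cite{MR4740251} enters. Without it, the variation would produce the adjoint (opposite-sided) operator, and the argument would break down.

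Both directions then follow from this formula. If $\mathcal{I}^{\alpha}u=f$, then $\delta\mathcal{J}_{I}^{\alpha}[u;v]=0$ for every $v$, so $u$ is a critical point. Conversely, if $u$ is a critical point, then $\left<\mathcal{I}^{\alpha}u-f,v\right>=0$ for all admissible $v$. The symmetry of the form lets us rewrite this as $\left<v,\mathcal{I}^{\alpha}u-f\right>=0$ for all $v$, and non-degeneracy in Proposition~\ref{prop:01} then gives $\mathcal{I}^{\alpha}u-f=0$. In the vector case I will apply the same argument componentwise, taking $v$ with a single nonzero component.

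The main obstacle I expect is ensuring that the class of variations $v$ is rich enough for non-degeneracy to apply, while every quantity remains well defined. Concretely, $v$ must be integrable, and $\mathcal{I}^{\alpha}v$ and the pairings with $u^*$ must make sense. Since $t\mapsto b-t+a$ is a bijection of $[a,b]$, we have $\left<w,v\right>=\int_a^b w(t)\,v(b-t+a)\,dt$. Letting $v$ range over, say, $C_c^\infty(a,b)$, the functions $v^*$ range over the same class. The fundamental lemma of the calculus of variations then yields $w=0$ almost everywhere for $w=\mathcal{I}^{\alpha}u-f$. This is the non-degeneracy claim made concrete, and I would state it in this form to close the argument, in line with the formal spirit declared in the introduction.
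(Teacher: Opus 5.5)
Your proposal is correct and follows the paper's proof essentially step by step. You expand $\mathcal{J}_{I}^{\alpha}[u+\varepsilon v]$ by bilinearity, use Proposition~\ref{prop:02} to merge the two mixed terms into $\left<\mathcal{I}^{\alpha}u-f,v\right>$, and conclude both directions from the non-degeneracy in Proposition~\ref{prop:01}, while your extra remarks (non-degeneracy made concrete via $C_c^\infty(a,b)$ variations and the fundamental lemma, and the componentwise vector case) are sound refinements of what the paper leaves formal.
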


\begin{proof}
We perform the first variation of $\mathcal{J}_{I}^{\alpha}[u]$.
Let $\Phi(\varepsilon) = \mathcal{J}_{I}^{\alpha}[u + \varepsilon v]$.
By definition, function $u$ is a critical point of $\mathcal{J}_{I}^{\alpha}[u]$
if, and only if, $\varepsilon = 0$ is a critical point of the real function
$\Phi(\varepsilon)$, that is, $\Phi'(0) = 0$. By \eqref{eq:03} and
the bilinearity of $\left<\cdot, \cdot \right>$, we have
$$
\Phi(\varepsilon) = \frac{1}{2} \left<\mathcal{I}^{\alpha} u, u\right>
+ \frac{1}{2} \varepsilon \left<\mathcal{I}^{\alpha} u, v\right>
+ \frac{1}{2} \varepsilon \left<\mathcal{I}^{\alpha} v, u\right>
+ \frac{1}{2} \varepsilon^2 \left<\mathcal{I}^{\alpha} v, v\right>
- \left<f, u\right> - \varepsilon \left<f, v\right>,
$$
and, by Proposition~\ref{prop:02}, 
$$
\Phi'(0) = 0
\Leftrightarrow
\frac{1}{2} \left<\mathcal{I}^{\alpha} u, v\right>
+ \frac{1}{2} \left<\mathcal{I}^{\alpha} v, u\right>
- \left<f, v\right> = 0
$$ 
is equivalent to
\begin{equation}
\label{eq:04}
\left<\mathcal{I}^{\alpha} u - f, v\right> = 0.
\end{equation}
Since $v$ is arbitrary, it follows from \eqref{eq:04} 
and the non-degeneracy of the bilinear form 
$\left<\cdot, \cdot\right>$ 
(see Proposition~\ref{prop:01}) that \eqref{eq:04}
is equivalent to $\mathcal{I}^{\alpha} u - f = 0$.
The proof is complete.
\end{proof}

Theorem~\ref{thm:01} not only solves the inverse fractional variational problem 
for \eqref{eq:01}, but also gives new insights to the direct problem of the fractional
calculus of variations \cite{book:frac:ICP2,MyID:404,book:adv:FCV,book:frac,MR4740251}. 
Indeed, we are not aware of any analogous result in the
existing literature.

\begin{cor}
\label{cor:01}	
Let $\mathcal{I}^{\alpha}$ be the left $\mathcal{I}^{\alpha}_{a+}$ 
or the right $\mathcal{I}^{\alpha}_{b-}$ 
Riemann--Liouville fractional integral of order $\alpha \in (0,1)$.
If $u$ is a minimizer of the fractional variational functional
\begin{equation}
\label{eq:05}
\mathcal{J}_{I}^{\alpha}[u] =
\int_{a}^{b} \left(\frac{1}{2} \left(\mathcal{I}^{\alpha} u\right)(t)-f(t)\right) u(b-t+a) \, dt,
\end{equation}
then $u$ satisfies the Euler--Lagrange equation
$\left(\mathcal{I}^{\alpha} u\right)(t) = f(t)$ 
for $t \in [a,b]$.
\end{cor}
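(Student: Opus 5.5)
The plan is to reduce Corollary~\ref{cor:01} to Theorem~\ref{thm:01} in two steps: recognise \eqref{eq:05} as the functional \eqref{eq:03}, and then pass from ``minimizer'' to ``critical point''.

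For the first step, I unfold the bilinear form \eqref{eq:01b}. Since $u^*(t)=u(b-t+a)$, we have
$$
\frac{1}{2}\left<\mathcal{I}^{\alpha} u,u\right> - \left<f,u\right>
= \int_a^b \left(\frac{1}{2}\left(\mathcal{I}^{\alpha} u\right)(t) - f(t)\right) u(b-t+a)\, dt .
$$
This is exactly \eqref{eq:05}, because the form is linear in its first argument. Hence \eqref{eq:05} is the functional $\mathcal{J}_{I}^{\alpha}$ of Theorem~\ref{thm:01}, with $\mathcal{I}^{\alpha}$ specialised to $\mathcal{I}^{\alpha}_{a+}$ or $\mathcal{I}^{\alpha}_{b-}$.

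For the second step, let $u$ be a minimizer, let $v$ be an arbitrary admissible variation, and set $\Phi(\varepsilon)=\mathcal{J}_{I}^{\alpha}[u+\varepsilon v]$. By bilinearity, as computed in the proof of Theorem~\ref{thm:01}, $\Phi$ is a real quadratic polynomial in $\varepsilon$, so it is differentiable. Because $u$ is a minimizer, $\varepsilon=0$ is a minimum of $\Phi$ on $\mathbb{R}$, and Fermat's theorem gives $\Phi'(0)=0$. Since $v$ was arbitrary, $u$ is a critical point of $\mathcal{J}_{I}^{\alpha}$. The ``only if'' direction of Theorem~\ref{thm:01} then yields $\mathcal{I}^{\alpha}u=f$ on $[a,b]$.

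The main obstacle is making sure the hypotheses of Theorem~\ref{thm:01} hold for these particular operators. First, Proposition~\ref{prop:02} must apply to $\mathcal{I}^{\alpha}_{a+}$ and $\mathcal{I}^{\alpha}_{b-}$; its proof draws this symmetry from Theorems~3.12 and 3.13 of \cite{MR4740251}, which relate the left integral to the right one through the dual $u^*$. Second, the non-degeneracy in Proposition~\ref{prop:01} must be available on the class of variations used. I would take the admissible $v$ to range over all integrable functions, for which $\mathcal{I}^{\alpha}u$ is well defined as in \cite{MR1347689}. Then $\left<\mathcal{I}^{\alpha}u-f,v\right>=0$ for all such $v$ forces $\mathcal{I}^{\alpha}u-f=0$, since $v\mapsto v^*$ is a bijection of that class.

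Strictly, this gives equality almost everywhere. It becomes pointwise equality on $[a,b]$ when $f$ and $\mathcal{I}^{\alpha}u$ are continuous, and this is consistent with the formal setting announced in the introduction.
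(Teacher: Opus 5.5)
Your proposal is correct and is exactly the deduction the paper intends; the paper states the corollary without a separate proof, as an immediate consequence of Theorem~\ref{thm:01}. As you do, \eqref{eq:05} is \eqref{eq:03} written out through \eqref{eq:01b}, a minimizer is a critical point by Fermat's theorem applied to $\Phi(\varepsilon)$, and Theorem~\ref{thm:01} turns critical points into solutions; the only slip is in wording, since the implication you use (critical point $\Rightarrow$ solution) is the ``if'' half of Theorem~\ref{thm:01}, not the ``only if'' half.
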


We illustrate Theorem~\ref{thm:01} with a simple fractional integral
equation.

\begin{example}
Let $a=0$, $b=1$, and let $\mathcal{I}^\alpha=\mathcal{I}_{0+}^\alpha$ be the
left Riemann--Liouville fractional integral of order $\alpha\in(0,1)$.
Consider
\begin{equation}
\label{eq:example-integral}
\mathcal{I}_{0+}^\alpha u(t)=1,\qquad t\in[0,1].
\end{equation}
It is well known that
$$
\mathcal{I}_{0+}^{\alpha} t^{\beta-1}
=
\frac{\Gamma(\beta)}{\Gamma(\alpha+\beta)}
t^{\alpha+\beta-1},
\qquad \beta>0.
$$
Taking $\beta=1-\alpha$, we obtain
$$
\mathcal{I}_{0+}^{\alpha}
\left(
\frac{t^{-\alpha}}{\Gamma(1-\alpha)}
\right)
=1.
$$
Hence,
\begin{equation}
\label{eq:example-solution}
u(t)=\frac{t^{-\alpha}}{\Gamma(1-\alpha)}
\end{equation}
is a solution of \eqref{eq:example-integral}.
Note that
since $0<\alpha<1$, the function \eqref{eq:example-solution}
belongs to $L^1(0,1)$.
According to Theorem~\ref{thm:01}, equation
\eqref{eq:example-integral} is associated with the variational functional
\begin{equation}
\label{eq:example-functional}
\mathcal{J}_I^\alpha[u]
=
\frac{1}{2}\langle \mathcal{I}_{0+}^\alpha u,u\rangle
- \langle 1,u\rangle ,
\end{equation}
where
$$
\langle v,u\rangle
=
\int_0^1 v(t)u^*(t),dt,
\qquad
u^*(t)=u(1-t).
$$
Thus,
\begin{equation}
\label{eq:example-functional-expanded}
\mathcal{J}_I^\alpha[u]
=
\frac{1}{2}\int_0^1
\bigl(\mathcal{I}_{0+}^\alpha u\bigr)(t)u(1-t) \, dt
-
\int_0^1 u(1-t) \, dt.
\end{equation}
Let $v$ be an admissible variation and set
$$
\Phi(\varepsilon)=\mathcal{J}_I^\alpha[u+\varepsilon v].
$$
Using the bilinearity of $\langle\cdot,\cdot\rangle$ and the symmetry
property
$$
\langle \mathcal{I}_{0+}^\alpha u,v\rangle
=
\langle \mathcal{I}_{0+}^\alpha v,u\rangle ,
$$
we obtain
$$
\Phi'(0)
=
\langle \mathcal{I}_{0+}^\alpha u-1,v\rangle .
$$
Therefore, $u$ is a critical point of $J_I^\alpha$ if and only if
$$
\langle \mathcal{I}_{0+}^\alpha u-1,v\rangle=0
$$
for all admissible variations $v$. Since the bilinear form
$\langle\cdot,\cdot\rangle$ is non-degenerate, this is equivalent to
$$
\mathcal{I}_{0+}^\alpha u(t)=1,
$$
that is, to the original fractional integral equation
\eqref{eq:example-integral}.
Consequently, the function \eqref{eq:example-solution} is precisely a
critical point of the functional \eqref{eq:example-functional}. This
example shows explicitly how the proposed bilinear form transforms a
fractional integral equation into a variational problem whose critical
points coincide with the solutions of the original equation.
\end{example}


\section{Variational Formulation of Fractional Differential Equations}
\label{sec:03}

We now address the problem of obtaining a variational formulation for
a fractional differential equation
\begin{equation}
\label{eq:06}
\mathcal{D}^{\alpha} u = f,
\end{equation}
where $u$ denotes a scalar or vector integrable function on the interval $[a, b]$
and $\mathcal{D}^{\alpha}$ denotes the left $\mathcal{D}^{\alpha}_{a+}$ 
or the right $\mathcal{D}^{\alpha}_{b-}$ 
Riemann--Liouville fractional derivative of order $\alpha$. 

The situations considered in Section~\ref{sec:02} 
and now here in Section ~\ref{sec:03} 
are fundamentally different
since $\mathcal{D}^{\alpha}$ is not symmetric with respect to
the bilinear form \eqref{eq:01b}. Indeed, it follows from the
integration by parts formulas involving only left or only right 
fractional derivatives (Theorems~3.14 and 3.15 of \cite{MR4740251}) that
$\left<\mathcal{D}^{\alpha} u_1,u_2\right> 
= - \left<\mathcal{D}^{\alpha} u_2,u_1\right>$,
that is, $\mathcal{D}^{\alpha}$ is anti-symmetric (or skew-symmetric)  
with respect to the bilinear form \eqref{eq:01b}.

To solve this difficulty, and find a variational formulation for the fractional
differential equation \eqref{eq:06}, we change the bilinear form \eqref{eq:01b}
so that the fractional derivative $\mathcal{D}^{\alpha}$ becomes symmetric
with respect to the new bilinear form. For that, we propose the bilinear
form $\left(\cdot, \cdot \right)$ defined by
\begin{equation}
\label{eq:07}
\left(v,u\right) := \left<v, \mathcal{D}^{\alpha} u\right>
= \int_{a}^{b} v(t) \left(\mathcal{D}^{\alpha} u\right)^*(t) \, dt.
\end{equation}

\begin{prop}
\label{prop:03}	
The fractional derivative operator $\mathcal{D}^{\alpha}$ is symmetric
with respect to the bilinear form \eqref{eq:07}, that is, 
\begin{equation}
\label{eq:08}
\left(\mathcal{D}^{\alpha} u_1,u_2\right)
= \left(\mathcal{D}^{\alpha} u_2,u_1\right).
\end{equation}
\end{prop}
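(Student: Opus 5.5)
We unfold definition \eqref{eq:07} on both sides of \eqref{eq:08} and reduce the identity to a statement about the original bilinear form \eqref{eq:01b}. By definition,
$$
\left(\mathcal{D}^{\alpha} u_1,u_2\right) = \left<\mathcal{D}^{\alpha} u_1, \mathcal{D}^{\alpha} u_2\right>,
\qquad
\left(\mathcal{D}^{\alpha} u_2,u_1\right) = \left<\mathcal{D}^{\alpha} u_2, \mathcal{D}^{\alpha} u_1\right>.
$$
The claim therefore reduces to the symmetry of $\left<\cdot,\cdot\right>$ applied to $w_1=\mathcal{D}^{\alpha}u_1$ and $w_2=\mathcal{D}^{\alpha}u_2$. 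This symmetry is part of Proposition~\ref{prop:01}.

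For completeness, we will also check the symmetry of $\left<\cdot,\cdot\right>$ directly, since the paper leaves Proposition~\ref{prop:01} as an exercise. We write $\left<w_1,w_2\right>=\int_a^b w_1(t)w_2(b-t+a)\,dt$ and substitute $s=b-t+a$. Then $ds=-dt$ and the limits $a,b$ are interchanged, so the integral becomes $\int_a^b w_1(b-s+a)w_2(s)\,ds=\left<w_2,w_1\right>$. For vector-valued $u$, the product $w_1 w_2^*$ is read as a dot product, which is symmetric, so the argument is unchanged.

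The remaining issue is the admissibility of the compositions involved. We must have $\mathcal{D}^{\alpha}u_i$ well defined, with $\mathcal{D}^{\alpha}u_1\,(\mathcal{D}^{\alpha}u_2)^*$ integrable on $[a,b]$. This is consistent with the formal standing assumptions of the paper, under which $u_i$ lie in the domain of $\mathcal{D}^{\alpha}$ (see \cite{MR1347689}). I expect no genuine obstacle here: notably, the anti-symmetry of $\mathcal{D}^{\alpha}$ with respect to \eqref{eq:01b} (Theorems~3.14 and 3.15 of \cite{MR4740251}) is not needed, because the new form \eqref{eq:07} builds in a second copy of $\mathcal{D}^{\alpha}$. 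The only point to state explicitly is that each $u_i$ is assumed regular enough for both sides of \eqref{eq:08} to be finite.
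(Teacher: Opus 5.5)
Your proof is correct and is exactly the paper's argument: unfold \eqref{eq:07} to get $\left<\mathcal{D}^{\alpha} u_1, \mathcal{D}^{\alpha} u_2\right> = \left<\mathcal{D}^{\alpha} u_2, \mathcal{D}^{\alpha} u_1\right>$ via the symmetry of $\left<\cdot,\cdot\right>$ from Proposition~\ref{prop:01}. Your substitution $s=b-t+a$, which verifies that symmetry directly, is a correct supplement to a step the paper leaves as an exercise.
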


\begin{proof}
From definition \eqref{eq:07}, and because 
$\left<\cdot, \cdot\right>$ is symmetric, we have
$$
\left(\mathcal{D}^{\alpha} u_1,u_2\right)
= \left<\mathcal{D}^{\alpha} u_1, \mathcal{D}^{\alpha} u_2\right>
= \left<\mathcal{D}^{\alpha} u_2, \mathcal{D}^{\alpha} u_1\right>
= \left(\mathcal{D}^{\alpha} u_2,u_1\right),
$$
which proves the result.
\end{proof}

\begin{prop}
The bilinear form $(\cdot,\cdot)$ is degenerate.
Indeed, every function $u$ satisfying $\mathcal{D}^\alpha u=0$
belongs to its kernel.
\end{prop}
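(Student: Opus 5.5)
The plan is to unfold definition \eqref{eq:07} and reduce both claims to the elementary structure of the bilinear form $\left<\cdot,\cdot\right>$ of \eqref{eq:01b}. By \eqref{eq:07} and the symmetry of $\left<\cdot,\cdot\right>$ (Proposition~\ref{prop:01}),
$$
(v,u) = \left<v, \mathcal{D}^{\alpha} u\right> = \left<\mathcal{D}^{\alpha} u, v\right>.
$$
So if $\mathcal{D}^\alpha u = 0$, then $(v,u) = \left<v,0\right> = \int_a^b v(t)\cdot 0\,dt = 0$ for every admissible $v$. Thus $u$ lies in the (right) kernel of $(\cdot,\cdot)$.

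Note that $(\cdot,\cdot)$ is not symmetric in its two arguments. If one also wants $u$ in the left kernel, I would use the skew-symmetry recalled above: $\left<\mathcal{D}^\alpha u_1,u_2\right> = -\left<\mathcal{D}^\alpha u_2,u_1\right>$, from Theorems~3.14 and 3.15 of \cite{MR4740251}. It gives
$$
(u,v) = \left<u,\mathcal{D}^\alpha v\right> = \left<\mathcal{D}^\alpha v,u\right> = -\left<\mathcal{D}^\alpha u, v\right> = 0 .
$$
Hence $u$ is annihilated from both sides, formally within the class of functions where those integration-by-parts formulas hold.

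To conclude that the form is \emph{degenerate}, it remains to show that the kernel of $\mathcal{D}^\alpha$ is nontrivial. I would exhibit an explicit nonzero element. For the left Riemann--Liouville derivative, take $u(t) = (t-a)^{\alpha-1}$, which is integrable on $[a,b]$ since $\alpha>0$. The power rule
$$
\mathcal{D}^\alpha_{a+}(t-a)^{\beta-1} = \frac{\Gamma(\beta)}{\Gamma(\beta-\alpha)}(t-a)^{\beta-\alpha-1}
$$
at $\beta=\alpha$ gives $1/\Gamma(0)=0$, so $\mathcal{D}^\alpha_{a+}u=0$. Equivalently, $\mathcal{I}^{1-\alpha}_{a+}u = \Gamma(\alpha)$ is constant, and its derivative vanishes. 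For the right derivative, $u(t)=(b-t)^{\alpha-1}$ works symmetrically; this is just the dual function of the previous example.

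Therefore there is a nonzero $u$ with $(v,u)=0$ for all $v$, which contradicts non-degeneracy. The only delicate point, and the main obstacle if one insists on rigor, is that this kernel element is singular at the endpoint. One must check that it belongs to the function class in which $\left<\cdot,\cdot\right>$ and $\mathcal{D}^\alpha$ are considered, and, for the two-sided version, in which the integration-by-parts formulas are valid. Since the paper works formally, I would simply remark that $(t-a)^{\alpha-1}\in L^1(a,b)$ and that $\mathcal{I}^{1-\alpha}_{a+}u$ is absolutely continuous, so $\mathcal{D}^\alpha_{a+}u$ is defined in the sense of \cite{MR1347689}.
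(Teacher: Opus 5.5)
Your core step, $(v,u)=\langle v,\mathcal{D}^\alpha u\rangle=\langle v,0\rangle=0$ for every $v$, is exactly the paper's proof. It suffices for the statement as the paper reads it, with kernel meaning $\{u:(v,u)=0 \text{ for all } v\}$. Your explicit element $u(t)=(t-a)^{\alpha-1}$, with $\mathcal{I}^{1-\alpha}_{a+}u\equiv\Gamma(\alpha)$, is a worthwhile addition. The paper only shows that $\ker\mathcal{D}^\alpha$ is contained in the kernel of $(\cdot,\cdot)$ and tacitly assumes $\ker\mathcal{D}^\alpha\neq\{0\}$, which is what degeneracy actually needs.

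The left-kernel paragraph, however, is wrong and should be dropped: the very $u$ you exhibit is not annihilated from the left. With $v\equiv 1$ one has $\mathcal{D}^\alpha_{a+}1=(t-a)^{-\alpha}/\Gamma(1-\alpha)$, so
\[
(u,1)=\langle u,\mathcal{D}^\alpha_{a+}1\rangle
=\frac{1}{\Gamma(1-\alpha)}\int_a^b (t-a)^{\alpha-1}(b-t)^{-\alpha}\,dt
=\frac{\Gamma(\alpha)\Gamma(1-\alpha)}{\Gamma(1-\alpha)}=\Gamma(\alpha)\neq 0 .
\]
Equivalently, for the pair $u_1=u$, $u_2=1$, the skew-symmetry you invoke would read $0=-\Gamma(\alpha)$. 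Whatever hypotheses make the one-sided integration-by-parts formulas of \cite{MR4740251} valid must therefore exclude this $u$. Note that $u\notin\mathcal{I}^\alpha_{a+}(L^1)$ precisely because $(\mathcal{I}^{1-\alpha}_{a+}u)(a^+)=\Gamma(\alpha)\neq 0$. This is the same quantity that puts $u$ in $\ker\mathcal{D}^\alpha_{a+}$.

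So your hedge ``formally, within the class where those formulas hold'' is not harmless here. The nonzero kernel elements of $\mathcal{D}^\alpha_{a+}$, namely multiples of $(t-a)^{\alpha-1}$, lie outside any such class that contains the constants. Since $(\cdot,\cdot)$ is not symmetric, a nontrivial one-sided kernel is already enough for degeneracy, so nothing else in your argument is affected.
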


\begin{proof}
If $\mathcal{D}^\alpha u=0$, then
$$
(v,u)
=
\langle v,\mathcal{D}^\alpha u\rangle
=
0
$$
for every $v$. Hence, $u$ belongs to the kernel of $(\cdot,\cdot)$.
\end{proof}

Using the bilinear form $\left(\cdot, \cdot \right)$,
we are now in conditions to give a variational formulation 
for the fractional differential equation \eqref{eq:06}
(our next result, Theorem~\ref{thm:02}). It is important to note, 
however, that while Theorem~\ref{thm:01}
gives a necessary and sufficient condition for $u$ to be a solution
of the fractional integral equation \eqref{eq:01}, 
Theorem~\ref{thm:02} only gives a necessary condition
for $u$ to be a solution of the fractional differential 
equation \eqref{eq:06}. Indeed, as one can understand 
from the proof of Theorem~\ref{thm:02},
the bilinear form $\left(\cdot, \cdot \right)$ is degenerate
and functional $\mathcal{J}_{D}^{\alpha}[u]$ may admit
more critical points than the solutions of the given
fractional differential equation.

\begin{theo}
\label{thm:02}
If function $u$ is a solution of the fractional differential equation \eqref{eq:06},
then $u$ is a critical point of the fractional variational functional
\begin{equation}
\label{eq:09}
\mathcal{J}_{D}^{\alpha}[u] :=
\frac{1}{2}\left(\mathcal{D}^{\alpha} u,u\right) - \left(f,u\right).
\end{equation}
\end{theo}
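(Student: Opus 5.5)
I will argue exactly as in the proof of Theorem~\ref{thm:01}, replacing the bilinear form \eqref{eq:01b} by the form $(\cdot,\cdot)$ of \eqref{eq:07}, and using Proposition~\ref{prop:03} in place of Proposition~\ref{prop:02}. Fix a solution $u$ of \eqref{eq:06} and an arbitrary admissible variation $v$. Set $\Phi(\varepsilon)=\mathcal{J}_{D}^{\alpha}[u+\varepsilon v]$. Since $\mathcal{D}^{\alpha}$ is linear and $(\cdot,\cdot)$ is bilinear (it is the composition of the bilinear form $\left<\cdot,\cdot\right>$ with the linear operator $\mathcal{D}^{\alpha}$ in the second slot), $\Phi$ is a quadratic polynomial in $\varepsilon$:
$$
\Phi(\varepsilon)=\frac12(\mathcal{D}^{\alpha}u,u)+\frac{\varepsilon}{2}\left[(\mathcal{D}^{\alpha}u,v)+(\mathcal{D}^{\alpha}v,u)\right]+\frac{\varepsilon^2}{2}(\mathcal{D}^{\alpha}v,v)-(f,u)-\varepsilon(f,v).
$$

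Differentiating at $\varepsilon=0$ gives
$$
\Phi'(0)=\frac12(\mathcal{D}^{\alpha}u,v)+\frac12(\mathcal{D}^{\alpha}v,u)-(f,v).
$$
By Proposition~\ref{prop:03}, $(\mathcal{D}^{\alpha}v,u)=(\mathcal{D}^{\alpha}u,v)$. Hence, by linearity in the first argument,
$$
\Phi'(0)=(\mathcal{D}^{\alpha}u-f,v).
$$
If $u$ solves \eqref{eq:06}, then $\mathcal{D}^{\alpha}u-f=0$. Therefore $\Phi'(0)=(0,v)=\left<0,\mathcal{D}^{\alpha}v\right>=0$ for every $v$, so $u$ is a critical point of \eqref{eq:09}.

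The only point needing care is the symmetry step. Proposition~\ref{prop:03} is stated for pairs of the form $(\mathcal{D}^{\alpha}u_1,u_2)$, and that is exactly the form in which the two cross terms appear, so it applies directly. I do not expect a genuine obstacle, since the implication only needs the first variation to vanish, not the non-degeneracy that would be required for the converse.

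I will close the proof by remarking why the converse fails. The last step of the proof of Theorem~\ref{thm:01} used non-degeneracy, which $(\cdot,\cdot)$ lacks. The condition $(\mathcal{D}^{\alpha}u-f,v)=\left<\mathcal{D}^{\alpha}u-f,\mathcal{D}^{\alpha}v\right>=0$ for all $v$ only forces $\mathcal{D}^{\alpha}u-f$ to be annihilated by the range of $\mathcal{D}^{\alpha}$ under $\left<\cdot,\cdot\right>$. This is consistent with the degeneracy proposition above, and it explains why Theorem~\ref{thm:02} is stated only as an implication.
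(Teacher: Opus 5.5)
Your proof is correct and follows the paper's argument. The paper likewise expands $\Phi(\varepsilon)=\mathcal{J}_D^\alpha[u+\varepsilon v]$ as a quadratic in $\varepsilon$, writing it directly in terms of $\langle\cdot,\cdot\rangle$ with the cross terms merged via Proposition~\ref{prop:03}. It obtains $\Phi'(0)=\langle \mathcal{D}^\alpha u-f,\mathcal{D}^\alpha v\rangle=(\mathcal{D}^\alpha u-f,v)$ and concludes from $\mathcal{D}^\alpha u=f$, while your closing remark on the converse matches the paper's discussion of the degeneracy of $(\cdot,\cdot)$.
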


\begin{proof}
Let $\Phi(\varepsilon) := \mathcal{J}_{D}^{\alpha}[u + \varepsilon v]$. 
Using Proposition~\ref{prop:03} and \eqref{eq:07}, we obtain that
\begin{equation*}
\Phi(\varepsilon) 
= \frac{1}{2} \left<\mathcal{D}^{\alpha} u, \mathcal{D}^{\alpha} u\right>
+ \varepsilon \left<\mathcal{D}^{\alpha} u, \mathcal{D}^{\alpha} v\right>
+ \frac{1}{2} \varepsilon^2 \left<\mathcal{D}^{\alpha} v, \mathcal{D}^{\alpha} v\right>
- \left<f, \mathcal{D}^{\alpha} u\right> - \varepsilon \left<f, \mathcal{D}^{\alpha} v\right>.
\end{equation*}
By definition, function $u$ is a critical point
of \eqref{eq:09} if $\varepsilon = 0$ is a critical point of $\Phi(\varepsilon)$.
Differentiating $\Phi(\varepsilon)$ with respect to $\varepsilon$ and putting
$\varepsilon = 0$ gives, by Fermat's theorem, that
\begin{equation}
\label{eq:10}
\begin{split}
\left<\mathcal{D}^{\alpha} u, \mathcal{D}^{\alpha} v\right>
- \left<f, \mathcal{D}^{\alpha} v\right> = 0
&\Leftrightarrow
\left<\mathcal{D}^{\alpha} u - f, \mathcal{D}^{\alpha} v\right> = 0\\
&\Leftrightarrow
\left(\mathcal{D}^{\alpha} u - f,  v\right) = 0.
\end{split}
\end{equation}
If $u$ is solution of \eqref{eq:06}, 
then $\mathcal{D}^{\alpha} u - f = 0$
and \eqref{eq:10} holds, that is,
$u$ is a critical point of 
$\mathcal{J}_{D}^{\alpha}[u]$.
\end{proof}

Theorem~\ref{thm:02} can be interpreted as a fractional analogue 
of the classical fact that symmetric operators generate 
quadratic variational functionals.

\begin{rem}
The degeneracy of $(\cdot,\cdot)$ is the fundamental reason why
Theorem~\ref{thm:02} only provides an implication and not an equivalence.
In contrast with the integral case, vanishing of
$(\mathcal{D}^\alpha u-f,v)$ for all admissible variations does not imply
uniqueness of the residual $\mathcal{D}^\alpha u-f$.
\end{rem}

The degeneracy of the bilinear 
form $(\cdot,\cdot)$ deserves further discussion.
In classical variational theory, non-degeneracy is essential to obtain
an equivalence between the Euler--Lagrange equation and the original
operator equation. In the present setting, the kernel of the operator
$\mathcal{D}^\alpha$ induces a kernel in the bilinear form
\[
(v,u)=\langle v,\mathcal{D}^\alpha u\rangle,
\]
so that critical points of the functional may exist that do not
necessarily correspond to solutions of the differential equation.
For this reason, Theorem~\ref{thm:02} provides only a necessary condition.

Nevertheless, Section~\ref{sec:rem:02}
shows that the functional \eqref{eq:09} 
admits a natural least-squares interpretation:
\[
\mathcal{J}_D^\alpha[u]
=
\frac{1}{2}\|\mathcal{D}^\alpha u-f\|^2
-\frac{1}{2}\|f\|^2,
\]
which differs from the least-squares functional only by a constant.
Therefore, minimizing $\mathcal{J}_D^\alpha$ corresponds to minimizing the
residual of the fractional differential equation.
Section~\ref{sec:rem:02} establishes a direct connection between the proposed
fractional variational formulation and the classical least-squares
approach for operator equations. It also suggests possible numerical
applications, since least-squares formulations are known to provide
stable and flexible approximation schemes for differential and integral
equations.

\section{Relation with least-squares methods}
\label{sec:rem:02}

Variational formulations play a central role in modern numerical
analysis because they provide the natural framework for Galerkin,
mixed, hybrid and least-squares approximation methods: see, e.g.,
\cite{MR2490235,MR1115205,MR0350177}.

From definitions \eqref{eq:09} and \eqref{eq:07}, 
\begin{equation*}
\begin{split}
\mathcal{J}_{D}^{\alpha}[u] 
&= \frac{1}{2}\left<\mathcal{D}^{\alpha} u,\mathcal{D}^{\alpha} u\right> 
- \left<f,\mathcal{D}^{\alpha} u\right>
= \left<\frac{1}{2} \mathcal{D}^{\alpha} u - f,\mathcal{D}^{\alpha} u\right> \\
&= \int_{a}^{b} \left(\frac{1}{2} \left(\mathcal{D}^{\alpha} u\right)(t)-f(t)\right) 
\left(\mathcal{D}^{\alpha}\left(\mathcal{D}^{\alpha} u\right)\right)^*(t) \, dt\\
&= \int_{a}^{b} \left(\frac{1}{2} \left(\mathcal{D}^{\alpha} u\right)(t)-f(t)\right) 
\left(\mathcal{D}^{2\alpha} u\right)(b-t+a) \, dt.
\end{split}
\end{equation*}

Variational formulations can be handled by several numerical methods \cite{MR4676440}.
More generally, variational formulations provide the natural
starting point for weak formulations, Galerkin approximations,
finite element methods, and least-squares methods: see, e.g.,
\cite{MR2490235,MR1115205,MR0350177}.
Here we interpret Theorem~\ref{thm:02} in terms 
of the least-squares method. Indeed,
\begin{equation*}
\begin{split}
\mathcal{J}_{D}^{\alpha}[u] 
&= \frac{1}{2}\left<\mathcal{D}^{\alpha} u,\mathcal{D}^{\alpha} u\right> 
- \left<f,\mathcal{D}^{\alpha} u\right>\\
&= \frac{1}{2} \left< \mathcal{D}^{\alpha} u - f,\mathcal{D}^{\alpha} u\right> 
- \frac{1}{2} \left<f,\mathcal{D}^{\alpha} u\right>\\
&= \frac{1}{2} \left< \mathcal{D}^{\alpha} u - f,\mathcal{D}^{\alpha} u - f\right> 
+ \frac{1}{2} \left<\mathcal{D}^{\alpha} u - f, f\right>
- \frac{1}{2} \left<f,\mathcal{D}^{\alpha} u\right>\\
&= \frac{1}{2} \left< \mathcal{D}^{\alpha} u - f,\mathcal{D}^{\alpha} u - f\right> 
- \frac{1}{2} \left<f,f\right>\\
&= \frac{1}{2} \left(\left\|\mathcal{D}^{\alpha} u - f\right\|^2 - \left\|f\right\|^2\right),
\end{split}
\end{equation*}
where $\left\|v\right\| := \sqrt{\left<v,v\right>}$
is induced by the positive definite
bilinear form $\left<\cdot,\cdot\right>$
defined in \eqref{eq:01b}. The relation
with the least-squares method is clear
since the functionals $\mathcal{J}_{D}^{\alpha}[u]$
and $\frac{1}{2} \left\|\mathcal{D}^{\alpha} u - f\right\|^2$
differ from a constant term and thus 
have the same critical points.

Follows a simple illustrative example.

\begin{example} 
We illustrate the least-squares interpretation with a simple fractional
differential equation. Let $a=0$, $b=1$, and let
$\mathcal{D}^\alpha=\mathcal{D}_{0+}^{\alpha}$ be the left Riemann--Liouville fractional
derivative of order $\alpha\in(0,1)$. Consider
\begin{equation}
\label{eq:ls-example}
\mathcal{D}_{0+}^{\alpha}u(t)=1,\qquad t\in(0,1).
\end{equation}
Since
$$
\mathcal{D}_{0+}^{\alpha}t^{\beta-1}
=
\frac{\Gamma(\beta)}{\Gamma(\beta-\alpha)}
t^{\beta-\alpha-1},
\qquad \beta>\alpha,
$$
taking $\beta=\alpha+1$ gives
$$
\mathcal{D}_{0+}^{\alpha}
\left(
\frac{t^\alpha}{\Gamma(\alpha+1)}
\right)
=1.
$$
Thus, the exact solution is
\begin{equation}
\label{eq:ls-exact}
u(t)=\frac{t^\alpha}{\Gamma(\alpha+1)} .
\end{equation}
The least-squares formulation consists in minimizing the residual
$$
R(u)=\mathcal{D}_{0+}^{\alpha}u-1
$$
through the functional
\begin{equation}
\label{eq:ls-functional}
\mathcal{L}[u]
=
\frac{1}{2}|\mathcal{D}_{0+}^{\alpha}u-1|^2 .
\end{equation}
Equivalently, in our notation, this differs from
$\mathcal{J}_D^\alpha[u]$ only by the constant $\frac{1}{2}|1|^2$.
To obtain a finite-dimensional approximation, let us look for $u$ in the
one-dimensional trial space
$$
V_1=\operatorname{span}{t^\alpha}.
$$
Thus,
$$
u_c(t)=ct^\alpha .
$$
Using
$$
\mathcal{D}_{0+}^{\alpha}t^\alpha=\Gamma(\alpha+1),
$$
we obtain
$$
R(u_c)(t)=c\,\Gamma(\alpha+1)-1 .
$$
Therefore,
$$
\mathcal L[c]
=
\frac12\int_0^1
\left(c,\Gamma(\alpha+1)-1\right)^2\,dt
=
\frac{1}{2}\left(c,\Gamma(\alpha+1)-1\right)^2 .
$$
The minimizer satisfies
$$
\frac{d\mathcal L}{dc}
=
\Gamma(\alpha+1)
\left(c,\Gamma(\alpha+1)-1\right)
=0,
$$
and hence
$$
c=\frac{1}{\Gamma(\alpha+1)}.
$$
Consequently,
$$
u_c(t)=\frac{t^\alpha}{\Gamma(\alpha+1)},
$$
which coincides with the exact solution \eqref{eq:ls-exact}.

This example shows explicitly how the least-squares formulation minimizes
the residual of the fractional differential equation. In this simple case,
the chosen trial space already contains the exact solution, and therefore
the least-squares approximation recovers it exactly.
\end{example}

From a computational perspective, the variational formulations
obtained here may provide an alternative route for the numerical
treatment of fractional equations. Indeed, once a variational
principle is available, one may exploit the large body of methods
developed for variational problems, including Galerkin,
mixed finite element, hybrid finite element, and least-squares
approaches \cite{MR2490235,MR1115205}.

Besides its theoretical interest, a variational formulation is often
the first step towards the construction of numerical approximation
schemes. This is one of the reasons why variational principles play
such an important role in applied mathematics and scientific
computing \cite{MR1115205,MR0350177}.

The representation
$$
\mathcal{J}_D^\alpha[u]
=
\frac{1}{2}\|\mathcal{D}^\alpha u-f\|^2
-\frac12\|f\|^2
$$
shows that the proposed variational formulation is closely related
to residual minimization methods. Indeed, minimizing $\mathcal{J}_D^\alpha$ 
is equivalent to minimizing the norm
of the residual
\[
R(u)=\mathcal{D}^\alpha u-f.
\]
This interpretation places our approach within the broad family of
least-squares variational principles that have been successfully used
for ordinary differential equations, partial differential equations,
inverse problems, and numerical approximation: see, e.g.,
\cite{MR2490235,MR4676440}.

From this perspective, Theorem~\ref{thm:02} provides a bridge between
fractional differential equations and variational residual
minimization methods.
This connection may be useful for the development of finite element,
spectral, and collocation schemes for fractional equations.

The investigation of such numerical schemes for fractional
equations is an interesting topic for future research.
	

\subsection*{Acknowledgements}

This work was supported by the 
Center for Research and Development 
in Mathematics and Applications (CIDMA)
under the Portuguese Foundation for Science and Technology 
(FCT, \url{https://ror.org/00snfqn58}) via grant UID/04106/2025 
(\url{https://doi.org/10.54499/UID/04106/2025}).



\end{document}